\newtheorem{theorem}{Theorem}[section]
\newtheorem{lemma}[theorem]{Lemma}
\newtheorem{corollary}[theorem]{Corollary}
\title{A new proof of Poincar\'e-Miranda theorem based on the classification of one-dimensional manifolds}
\author{Xiao-Song Yang\\
	1) School of Mathematics and Statistics\\
	Huazhong University of Science and Technology\\
	2) Hubei Key Laboratory of Engineering Modeling and Scientific Computing\\
	Huazhong University of Science and Technology, Wuhan 430074, China}
\date{}
\begin{document}
	
	\maketitle
	
	\begin{abstract}
		This note gives a new elementary proof of Poincar\'e-Miranda theorem based on Sard's theorem and the simple classification of one-dimensional manifolds.
	\end{abstract}
	
	\textbf{Keywords:} classification of one-dimensional manifolds, Sard's theorem, Poincar\'e-Miranda theorem
	
	\textbf{AMS Subject Classification:} 58C30, 55M20, 57R35
	
	\section{Introduction}
	
	The classical Poincar\'e-Miranda theorem states that given any continuous map from a cuboid in $\mathbb{R}^{n}$ into $\mathbb{R}^{n}$, if its $j$th component takes constant opposite signs on the corresponding opposite $j$th-faces of the cuboid ($j=1,\cdots,n$), then it has at least one zero in this cuboid.
	
	This theorem was stated in 1883 by J. H. Poincar\'e \cite{poincare1883}, with a hint of proof. Later C. Miranda \cite{miranda1940} proved the equivalence of this theorem with Brouwer's fixed point theorem. Thus a rigorous proof of the Poincar\'e-Miranda theorem was given based on the well known Brouwer's Fixed Point Theorem. In 2013, an elementary proof based on the basic properties of differential forms in $\mathbb{R}^{n}$ and Stokes formula on a cuboid was obtained by J. Mawhin \cite{mawhin2013}.
	
	In the present note, we will give a new elementary proof of the above theorem just by means of Sard's theorem \cite{milnor1997} and the simple classification of one-dimensional manifolds.
	
	\section{On zeros of smooth maps from a cuboid to $\mathbb{R}^{n}$}
	
	In this section we consider the zeros of a smooth map from an $n$-dimensional cuboid to $\mathbb{R}^{n}$. We need two elementary obvious facts.
	
	\begin{lemma}\label{lemma1}
		Consider a $C^{1}$ function $f:[a,b]\to\mathbb{R}^{1}$, suppose $|f^{\prime}(x)|\neq 0$ for every $x\in\mathbb{R}^{1}$ with $f(x)=0$. If $f(a)\cdot f(b)<0$, then the function $f$ has an odd number of zeros within the interval $[a,b]$; If $f(a)\cdot f(b)>0$, then the function $f$ has an even number of zeros within the interval $[a,b]$, including the possibility of no zeros.
	\end{lemma}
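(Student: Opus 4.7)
The plan is to reduce the statement to counting sign changes on the finite zero set. First I would observe that the hypothesis ``$|f'(x)|\neq 0$ whenever $f(x)=0$'' forces each zero of $f$ in $[a,b]$ to be isolated: at any such zero $x_0$, continuity of $f'$ (since $f\in C^1$) gives a neighborhood on which $f'$ has constant nonzero sign, so $f$ is strictly monotonic there and $x_0$ is its unique zero in that neighborhood. Combined with compactness of $[a,b]$, this yields that the zero set $Z=\{x\in[a,b]:f(x)=0\}$ is finite; write it as $a<x_1<\cdots<x_k<b$ (the strict inequalities at the endpoints use that $f(a)f(b)\neq 0$).

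Next I would note that at each $x_i$ the function $f$ actually changes sign, not merely vanishes. Indeed, strict monotonicity of $f$ on a small neighborhood of $x_i$ (from the previous step) implies $f$ takes opposite signs immediately to the left and right of $x_i$. Meanwhile, on each subinterval $(x_i,x_{i+1})$ (and on $[a,x_1)$ and $(x_k,b]$), the function $f$ has no zeros, so by the intermediate value theorem $f$ maintains a constant sign on each such subinterval.

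Combining these, the sequence $\operatorname{sgn} f(a),\operatorname{sgn} f(t_1),\operatorname{sgn} f(t_2),\ldots,\operatorname{sgn} f(t_k),\operatorname{sgn} f(b)$, where $t_i$ is any point in $(x_i,x_{i+1})$ (with the usual convention at the ends), flips sign exactly $k$ times, once at each $x_i$. Therefore $\operatorname{sgn} f(b)=(-1)^k\operatorname{sgn} f(a)$. The two cases of the lemma read off immediately: $f(a)f(b)<0$ forces $k$ odd, and $f(a)f(b)>0$ forces $k$ even (including $k=0$).

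I do not expect a real obstacle here; the only place to be slightly careful is insisting that $f' \neq 0$ at a zero gives genuine sign change (rather than just isolation), which is needed to conclude that every zero contributes one flip to the parity count. Everything else is a routine application of continuity, compactness, and the intermediate value theorem.
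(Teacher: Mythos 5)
Your proof is correct and complete: isolation and finiteness of the zero set, genuine sign change at each zero (the point you rightly flag as the one needing care), constant sign between zeros, and the resulting parity identity $\operatorname{sgn} f(b)=(-1)^{k}\operatorname{sgn} f(a)$ together give exactly the stated dichotomy. The paper offers no proof at all --- it lists this lemma as an ``elementary obvious fact'' --- so there is nothing to compare against, but your argument is the standard one a careful writer would supply.
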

	
	\begin{lemma}\label{lemma2}
		Every $C^{1}$ function $f:S^{1}\to\mathbb{R}^{1}$ has even number of zeros if $|f^{\prime}(x)|\neq 0$ for each $x$ with $f(x)=0$.
	\end{lemma}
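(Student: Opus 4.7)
The plan is to reduce Lemma \ref{lemma2} to Lemma \ref{lemma1} by cutting the circle at a non-zero point and pulling $f$ back to a closed interval. The heuristic is clear: at every zero the function crosses zero transversally (because $|f'|\neq 0$ there), so going around the circle produces a sequence of sign changes, and since we return to the starting value after one full loop the total number of sign changes must be even.

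First I would argue that the zero set of $f$ is finite. The condition $|f'(x)|\neq 0$ at each zero $x$ ensures, via a local application of the inverse function theorem (or just a first-order Taylor estimate in any smooth local parametrization of $S^1$), that each zero is isolated; together with compactness of $S^1$ this gives finitely many zeros. In particular the complement of the zero set is nonempty, so I can choose a base point $p_0\in S^1$ with $f(p_0)\neq 0$.

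Next I would parametrize $S^1$ by a $C^1$ map $\gamma:[a,b]\to S^1$ with $\gamma(a)=\gamma(b)=p_0$ and $\gamma$ injective on $[a,b)$ and with nowhere-vanishing tangent. Setting $F:=f\circ \gamma:[a,b]\to\mathbb{R}^{1}$, the chain rule shows that $|F'(t)|=|f'(\gamma(t))|\,|\gamma'(t)|\neq 0$ at every $t$ with $F(t)=0$, so $F$ satisfies the hypothesis of Lemma \ref{lemma1}. Moreover $F(a)=F(b)=f(p_0)\neq 0$, hence $F(a)\cdot F(b)=f(p_0)^{2}>0$, and $\gamma$ establishes a bijection between the zeros of $F$ in $[a,b]$ and the zeros of $f$ on $S^1$ (since $p_0$ is not a zero, the identification $a\sim b$ does not collapse any pair of zeros). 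Applying Lemma \ref{lemma1} to $F$ now yields that the number of zeros of $f$ on $S^1$ is even.

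I do not expect any serious obstacle: the only mildly technical point is verifying that the cut-and-unfold construction respects the regularity hypothesis, which reduces to the chain rule identity above and the choice of a parametrization with non-vanishing tangent (e.g. the standard arc-length parametrization of $S^1$). Everything else is a direct invocation of Lemma \ref{lemma1}.
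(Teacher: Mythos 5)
Your proof is correct. The paper states Lemma \ref{lemma2} without proof (it is offered as one of two ``elementary obvious facts''), and your cut-and-unfold argument --- choosing a base point $p_{0}$ with $f(p_{0})\neq 0$, parametrizing $S^{1}$ by a regular loop $\gamma:[a,b]\to S^{1}$ based at $p_{0}$, and applying the even case of Lemma \ref{lemma1} to $F=f\circ\gamma$ since $F(a)F(b)=f(p_{0})^{2}>0$ --- is exactly the natural reduction the author presumably has in mind, with the finiteness of the zero set and the chain-rule check handled correctly.
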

	
	Now let us recall the statement of the Poincar\'e-Miranda theorem. On zeros of smooth maps from $n$-dimensional cube to $\mathbb{R}^{n}$, let $I^{n}=\prod_{i=1}^{n}[a_{i},b_{i}]$ be an $n$-dimensional cube, let $f:I^{n}\to\mathbb{R}^{n}$ be continuous map satisfying the following boundary condition \eqref{condition1}:
	
	For $f(x)=(f_{1}(x),f_{2}(x),\cdots,f_{n}(x))$,
	\begin{equation}\label{condition1}
		f_{i}(x_{1},\cdots,a_{i},\cdots,x_{n})\cdot f_{i}(x_{1},\cdots,b_{i},\cdots,x_{n})<0,\quad i=1,2,\cdots,n
	\end{equation}
	
	Then the well known Poincar\'e-Miranda theorem states that there is at least one zero of $f$ within the $n$-dimensional cube $I^{n}$. If $f$ is a smooth map, then there is more to say about the zeros of $f$. In this note we will show that in a generic sense, a smooth map satisfying condition \eqref{condition1} has an odd number of zeros in $I^{n}$, the proof will be carried out by elementary techniques from differential topology. First we have the following statement.
	
	\begin{theorem}\label{theorem1}
		Suppose a smooth map $f:I^{n}\rightarrow\mathbb{R}^{n}$ satisfies condition \eqref{condition1}. Then for any $\epsilon>0$, there is a point $q\in\mathbb{R}^{n}$ with $||q||<\epsilon$ such that the set $f^{-1}(q)=\{x\in I^{n},f(x)=q\}$ has an odd number of points, i.e. $\#(f^{-1}(q))\equiv 1\pmod{2}$.
	\end{theorem}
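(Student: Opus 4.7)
The plan is to induct on $n$, proving the slightly stronger statement that there exists $\delta>0$ such that for \emph{every} regular value $q$ of $f$ with $\|q\|<\delta$, the set $f^{-1}(q)$ is finite with an odd number of points. This stronger form is needed because at the inductive step a single point $q$ must simultaneously witness the hypothesis at several sub-problems. The base case $n=1$ is immediate: Sard's theorem makes almost every $q\in\mathbb{R}$ regular, and for $|q|<\min(|f(a)|,|f(b)|)$ the shifted function $f-q$ has opposite signs at the endpoints and only non-degenerate zeros, so Lemma~\ref{lemma1} delivers an odd count.

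For the inductive step I would isolate the first coordinate: set $g=(f_2,\ldots,f_n)\colon I^n\to\mathbb{R}^{n-1}$ and apply Sard's theorem simultaneously to $g$ and to its two restrictions $g|_{\{x_1=a_1\}}$ and $g|_{\{x_1=b_1\}}$. For each $i\ge 2$, condition \eqref{condition1} together with compactness gives a strictly positive lower bound on $|f_i|$ over $\{x_i=a_i\}\cup\{x_i=b_i\}$; requiring $|q_i|$ to fall below this bound (an open condition) then forces $g\neq q'$ on the faces $\{x_i=a_i\}$ and $\{x_i=b_i\}$ for every $i\ge 2$. All of these requirements are either open or Sard-generic, so they hold simultaneously for a point $q'=(q_2,\ldots,q_n)$ of arbitrarily small norm. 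The preimage $M:=g^{-1}(q')$ is then a smooth compact $1$-manifold whose boundary is confined to the two $x_1$-faces. Each restriction $g|_{\{x_1=a_1\}}$ and $g|_{\{x_1=b_1\}}$ obeys the Poincar\'e-Miranda condition on the $(n-1)$-cube $\prod_{i\ge 2}[a_i,b_i]$, so the inductive hypothesis, invoked in its stronger form above, makes both $N_a:=|\partial M\cap\{x_1=a_1\}|$ and $N_b:=|\partial M\cap\{x_1=b_1\}|$ odd for our $q'$.

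Finally I would invoke the classification of compact $1$-manifolds: $M$ is a disjoint union of finitely many circles and closed arcs, the arcs splitting into three types with counts $A_a,A_b,A_{ab}$ according to whether both endpoints lie on $\{x_1=a_1\}$, both on $\{x_1=b_1\}$, or one on each. A last application of Sard picks $q_1$ with $|q_1|$ small enough to be a regular value of $f_1|_M$ and so that $f_1-q_1$ retains the sign of $f_1$ on the $x_1$-faces (both signs being definite by \eqref{condition1}). Lemma~\ref{lemma2} then contributes an even number of $q_1$-preimages on each circle, while Lemma~\ref{lemma1} contributes an odd number on each arc precisely when the arc is of type $A_{ab}$. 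Hence, writing $q=(q_1,\ldots,q_n)$,
\[
|f^{-1}(q)| \;=\; \bigl|(f_1|_M)^{-1}(q_1)\bigr| \;\equiv\; A_{ab} \pmod 2,
\]
and since $N_a=2A_a+A_{ab}$ is odd, $A_{ab}$ is odd, closing the induction.

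The principal difficulty is essentially organizational rather than conceptual: one must orchestrate the choice of a single $q$ to satisfy three layers of constraints at once --- Sard-regularity for the interior map and for both face restrictions, smallness enough to exploit \eqref{condition1} on the $i\ge 2$ faces, and landing in the odd-count regime of the inductive hypothesis on both $x_1$-faces. This is exactly what forces carrying the ``almost every sufficiently small regular $q$'' version of the statement through the induction rather than the bare existential assertion recorded in Theorem~\ref{theorem1}.
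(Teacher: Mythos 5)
Your proof follows the same skeleton as the paper's: induct on $n$, peel off one coordinate so that the common level set of the remaining $n-1$ components is a compact $1$-manifold whose boundary is confined to one pair of opposite faces, and then run the parity count of Lemmas~\ref{lemma1} and~\ref{lemma2} over circles and the three types of arcs; whether you single out the first coordinate or the last is immaterial. The one substantive difference is your decision to carry the strengthened statement (``for \emph{every} sufficiently small regular value $q$,'' not merely ``for some $q$'') through the induction, and this is a genuine improvement: the paper's Theorem~\ref{theorem1} is purely existential, so when its inductive step invokes ``the inductive hypothesis'' to conclude that $\bar f$ restricted to each of $B^{-}_{m+1}$ and $B^{+}_{m+1}$ hits the \emph{same} $\bar q$ an odd number of times, it is silently using exactly the universal form you make explicit --- the existential form would only supply two possibly different witnesses, one per face. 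One caveat: as written, your inductive step does not quite deliver the statement you announce. Being a regular value of $f$ does not force $q'=(q_2,\dots,q_n)$ to be a regular value of $g$ or of its face restrictions (for instance $g$ may have a degenerate $q'$-preimage at a point where $f_1\neq q_1$), so your construction establishes the odd count only for $q$ in the full-measure set cut out by your several Sard conditions, not for every small regular value of $f$. That is still enough to close the induction, since all the conditions you impose are either open or full measure; the clean fix is to phrase the carried hypothesis as ``for almost every $q$ with $\|q\|<\delta$,'' or else to keep your formulation and justify it by the standard fact that the mod $2$ cardinality of $f^{-1}(q)$ is the same for all regular values $q$ in one component of the complement of $f(\partial I^{n})$.
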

	
	\begin{proof}
		We prove the theorem by induction on $n$.
		
		\textbf{STEP 1:} For $n=1$, one has $f:[a_{1},b_{1}]\rightarrow\mathbb{R}^{1}$. In view of Sard's Theorem, for any $\epsilon>0$, there is $q\in\mathbb{R}^{1}$ satisfying $|q|<\epsilon$, $|q|<\min\{|f(a_{1})|,|f(b_{1})|\}$ such that $f^{-1}(q)$ is nonempty and $q$ is a regular value of $f$. Since $|f^{\prime}(x)|\neq 0$ for each $x\in f^{-1}(q)$. One can easily prove the statement as in Lemma \ref{lemma1}.
		
		\textbf{STEP 2:} For $n=2$, let $f(x)=(f_{1}(x),f_{2}(x))$, $x=(x_{1},x_{2})\in I^{2}$. By Sard's Theorem, for any $\epsilon>0$, there is a point $q=(q_{1},q_{2})\in\mathbb{R}^{2}$ which is a regular value of $f$ and satisfies
		\[
		||q||<\epsilon,
		\]
		and
		\[
		|q_{1}|<\min\{\hat{f}_{1}(a_{1}),\hat{f}_{1}(b_{1})\},\quad |q_{2}|<\min\{\hat{f}_{2}(a_{2}),\hat{f}_{2}(b_{2})\},
		\]
		where
		\begin{align*}
			\hat{f}_{1}(a_{1})&=\min_{x_{2}\in[a_{2},b_{2}]}|f_{1}(a_{1},x_{2})|,\quad \hat{f}_{1}(b_{1})=\min_{x_{2}\in[a_{2},b_{2}]}|f_{1}(b_{1},x_{2})|, \\
			\hat{f}_{2}(a_{2})&=\min_{x_{1}\in[a_{1},b_{1}]}|f_{2}(x_{1},a_{2})|,\quad \hat{f}_{2}(b_{2})=\min_{x_{1}\in[a_{1},b_{1}]}|f_{2}(x_{1},b_{2})|.
		\end{align*}
		Clearly $q_{1}$ is also a regular value of $f_{1}(x)$. This combined with boundary condition \eqref{condition1} implies nonemptiness of $f_{1}^{-1}(q_{1})$ and $f_{2}^{-1}(q_{2})$, and $f_{1}^{-1}(q_{1})$ is a 1-dimensional closed submanifold of $I^{2}$, therefore $f_{1}^{-1}(q_{1})$ consists of circles or closed segments with their boundaries contained in $\partial I^{2}$, precisely in $[a_{1},b_{1}]\times \{a_{2}\} \cup [a_{1},b_{1}]\times \{b_{2}\}$. First we prove that $f_{1}^{-1}(q_{1})$ contains at least one segment connecting $[a_{1},b_{1}]\times \{a_{2}\}$ and $[a_{1},b_{1}]\times \{b_{2}\}$ as shown in Figure \ref{DV_1}. Since $q_{1}$ is regular for $f_{1}$ both on $I^{2}$ and in restriction to $\partial I^{2}$, $f_{1}$ has an odd number of points in $[a_{1},b_{1}]\times \{a_{2}\}$ and $[a_{1},b_{1}]\times \{b_{2}\}$ such that $f_{1}=q_{1}$. This implies that not every line segment has both of its endpoints contained in $[a_{1},b_{1}]\times \{a_{2}\}$ or contained in $[a_{1},b_{1}]\times \{b_{2}\}$, because the subset of points in $[a_{1},b_{1}]\times\{a_{2},b_{2}\}$ that satisfy $f_{1}=q_{1}$ also belong to $f_{1}^{-1}(q_{1})$. Therefore $f_{1}^{-1}(q_{1})$ must have at least one segment connecting $[a_{1},b_{1}]\times \{a_{2}\}$ and $[a_{1},b_{1}]\times \{b_{2}\}$. In fact, from the above argument it is easy to see that $f_{1}^{-1}(q_{1})$ contains an odd number of segments that connect $[a_{1},b_{1}]\times \{a_{2}\}$ and $[a_{1},b_{1}]\times \{b_{2}\}$.
		
		\begin{figure}
			\centering
			\includegraphics[width=0.7\linewidth]{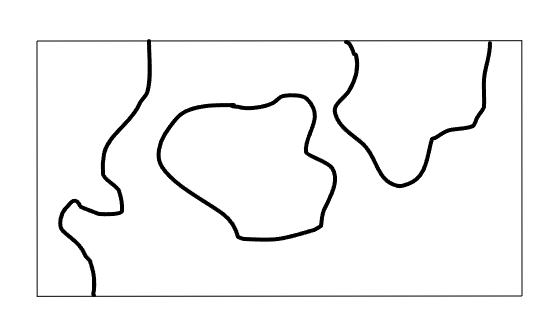}
			\caption{The image of $f_{1}^{-1}(q_{1})$.} 
			\label{DV_1}
		\end{figure}
		
		Now consider $f_{2}(x)$. Let $h$ be a connected component of $f_{1}^{-1}(q_{1})$. If $h$ is a segment connecting $[a_{1},b_{1}]\times \{a_{2}\}$ and $[a_{1},b_{1}]\times \{b_{2}\}$, then $f_{2}(x)|_{h}\rightarrow\mathbb{R}^{1}$ has an odd number of points $\bar{x}$ in $h$ such that $f_{2}(\bar{x})=q_{2}$. If $h$ is a circle then $f_{2}$ has an even number of points $\bar{x}$ in $h$ with $f_{2}(\bar{x})=q_{2}$ because of regularity of $q$ for $f=(f_{1},f_{2})$. The same conclusion also holds for $h$ only intersecting $[a_{1},b_{1}]\times \{a_{2}\}$ or $[a_{1},b_{1}]\times \{b_{2}\}$. In other words, there are an even number of points such that $f_{2}(\bar{x})=q_{2}$ in this case.
		
		Putting these together, we see that $f_{2}$ has an odd number of points at which $f_{2}$ equals $q_{2}$. Thus $f$ has an odd number of points at which $f$ equals $q=(q_{1},q_{2})$, and we have proved the theorem in case $n=2$.
		
		\textbf{STEP 3:} Now assuming the theorem holds for $n=m$, let's consider the map $f:I^{m+1}\rightarrow\mathbb{R}^{m+1}$.
		
		For $f=(f_{1},f_{2},\cdots,f_{m+1})$, let $\bar{f}:I^{m+1}\rightarrow\mathbb{R}^{m}$, where
		\[
		\bar{f}(x) = (f_{1}(x),f_{2}(x),\cdots,f_{m}(x)).
		\]
		
		For any $\epsilon>0$, let $q=(q_{1},q_{2},\cdots,q_{m+1})$ be a regular value of $f$, such that
		\[
		||q||<\epsilon,
		\]
		and
		\[
		|q_{i}|<\min\{\hat{f}_{i}(a_{i}),\hat{f}_{i}(b_{i})\},\quad i=1,2,\cdots,m+1,
		\]
		where
		\begin{align*}
			\hat{f}_{i}(a_{i})&=\min\{|f_{i}(x_{1},\cdots,x_{i},\cdots,x_{m+1})| : x\in I^{m+1},\ x_{i}=a_{i}\}, \\
			\hat{f}_{i}(b_{i})&=\min\{|f_{i}(x_{1},\cdots,x_{i},\cdots,x_{m+1})| : x\in I^{m+1},\ x_{i}=b_{i}\}.
		\end{align*}
		
		Since $\bar{q}=(q_{1},q_{2},\cdots,q_{m})$ is also a regular value for $\bar{f}$, so $\bar{f}^{-1}(\bar{q})$ is a 1-dimensional closed submanifold of $I^{m+1}$ and its boundary points are contained in $\partial I^{m+1}$ precisely in
		\[
		\{x\in I^{m+1}\mid x_{m+1}=a_{m+1}\} \cup \{x\in I^{m+1}\mid x_{m+1}=b_{m+1}\} = (B^{-}_{m+1} \cup B^{+}_{m+1}),
		\]
		where
		\begin{align*}
			B^{-}_{m+1}&=\{x\in I^{m+1}\mid x_{m+1}=a_{m+1}\}, \\
			B^{+}_{m+1}&=\{x\in I^{m+1}\mid x_{m+1}=b_{m+1}\}.
		\end{align*}
		
		Clearly every component $h$ of $\bar{f}^{-1}(\bar{q})$ is a segment or a circle. We know from the inductive hypothesis that $\bar{f}$ restricted to $B^{-}_{m+1}$ or $B^{+}_{m+1}$ has an odd number of points at which $\bar{f}$ equals $\bar{q}$, thus, there is an odd number of segments that connect $B^{-}_{m+1}$ and $B^{+}_{m+1}$.
		
		Let $h$ be a connected component of $\bar{f}^{-1}(\bar{q})$. If $h$ is a segment connecting $B^{-}_{m+1}$ and $B^{+}_{m+1}$, then from Lemma \ref{lemma1}, the map $f_{m+1}|_{h}\rightarrow\mathbb{R}$ has an odd number of points at which $f_{m+1}$ equals $q_{m+1}$; and $f_{m+1}$ has an even number of points at which $f_{m+1}$ equals $q_{m+1}$ in the other two cases as discussed in the $n=2$ case. It is clear that the sum of all the points of $\bar{f}^{-1}(\bar{q})$ at which $f_{m+1}$ equals $q_{m+1}$ is an odd number and all of these points are just the set $f^{-1}(q)$.
		
		The proof is completed.
	\end{proof}
	
	By means of the same argument in Theorem \ref{theorem1}, it is easy to see the following statement holds.
	
	\begin{corollary}\label{corollary1}
		Suppose that smooth maps from the cube to $\mathbb{R}^{n}$ satisfy the Poincar\'e-Miranda boundary conditions. If the zero is a regular value of $f$, then the number of zeros of $f$ is odd.
	\end{corollary}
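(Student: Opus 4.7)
The plan is to reduce the corollary to Theorem \ref{theorem1} by a short perturbation argument based on the inverse function theorem, rather than re-running the entire inductive construction.

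Because $0\in\mathbb{R}^n$ is a regular value of $f$ and $I^n$ is compact, the preimage $f^{-1}(0)$ is a finite set $\{x^{(1)},\ldots,x^{(k)}\}$, and the boundary condition \eqref{condition1} ensures every zero lies in the interior of $I^n$ (on each face some component $f_i$ is bounded away from $0$). First I would apply the inverse function theorem at each $x^{(j)}$ to obtain pairwise disjoint open neighborhoods $U_j\subset I^n$ and a common open neighborhood $V\subset\mathbb{R}^n$ of $0$ such that $f|_{U_j}\colon U_j\to V$ is a diffeomorphism for every $j$. Setting $K=I^n\setminus\bigcup_{j=1}^{k}U_j$, the compactness of $K$ together with $0\notin f(K)$ produces a constant $\delta>0$ with $\|f(x)\|\geq\delta$ on $K$.

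Next I would pick $\epsilon>0$ small enough that $\{q\in\mathbb{R}^n:\|q\|<\epsilon\}\subset V$ and $\epsilon<\delta$. By Theorem \ref{theorem1} applied with this $\epsilon$, there exists a point $q\in\mathbb{R}^n$ with $\|q\|<\epsilon$ for which $\#f^{-1}(q)$ is odd. But the choice of $\epsilon$ forces $f^{-1}(q)\subset\bigcup_{j}U_j$, and the diffeomorphism property yields exactly one preimage of $q$ in each $U_j$; hence $\#f^{-1}(q)=k$. Comparing the two, $k=\#f^{-1}(0)$ is odd.

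The argument is essentially routine; the only step worth pausing on is that the $\epsilon$ handed to Theorem \ref{theorem1} must be chosen \emph{after} the local diffeomorphism scale is fixed, which is legitimate because Theorem \ref{theorem1} holds for every $\epsilon>0$. An alternative plan would be to redo the induction of Theorem \ref{theorem1} with $q=0$ throughout, but this needs extra regularity hypotheses for the intermediate maps $\bar f, f_1,\ldots$, which are not implied by regularity of $0$ for $f$ alone; the perturbation route sidesteps this issue and is therefore cleaner.
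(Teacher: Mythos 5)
Your argument is correct, but it takes a genuinely different route from the paper. The paper disposes of the corollary in a single sentence, asserting that the inductive construction of Theorem \ref{theorem1} can simply be repeated with the target value $0$ in place of a nearby regular value $q$. You instead treat Theorem \ref{theorem1} as a black box and reduce to it by a standard stack-of-records/local-stability argument: regularity of $0$ makes $f^{-1}(0)=\{x^{(1)},\dots,x^{(k)}\}$ finite and interior, the inverse function theorem gives disjoint $U_j$ mapped diffeomorphically onto a common $V\ni 0$, compactness bounds $\|f\|\geq\delta$ off $\bigcup_j U_j$, and any regular value $q$ with $\|q\|<\min(\delta,\operatorname{dist}(0,\partial V))$ supplied by Theorem \ref{theorem1} then has exactly $k$ preimages, forcing $k$ to be odd. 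Each step is sound, including the order of quantifiers you flag (choose $\epsilon$ after fixing $\delta$ and $V$). What your version buys is rigor: as you correctly note, regularity of $0$ for $f$ does \emph{not} imply regularity of $\bar q=0$ for the truncated map $\bar f$, nor for the restrictions of $f_1,\dots$ to faces of the cube, and the induction in Theorem \ref{theorem1} genuinely uses those auxiliary regularity conditions; so the paper's ``same argument'' proof has a gap that would itself have to be filled by a perturbation of exactly the kind you carry out. What the paper's (intended) route buys is brevity and the avoidance of any appeal to the inverse function theorem. One small presentational point: you should say explicitly that the common $V$ is obtained by intersecting the images $V_j$ from the inverse function theorem and then shrinking each $U_j$ to $(f|_{U_j})^{-1}(V)$; as written, ``a common open neighborhood $V$'' is asserted rather than constructed, though the construction is standard.
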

	
	From Corollary \ref{corollary1}, we can have a theorem that is of a flavor of Poincar\'e-Hopf theorem on vector field.
	
	\begin{theorem}\label{theorem2}
		Let $v(x)=(v_{1}(x),v_{2}(x),\cdots,v_{n}(x))$ be a smooth vector field on the cube $I^{n}=[-1,1]\times[-1,1]\times\cdots\times[-1,1]$. Suppose $v$ points outwards on the boundary $\partial I^{n}$, and every zero of $v$ is non-degenerate, then the number of zeros of $v$ is odd.
	\end{theorem}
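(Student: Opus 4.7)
The plan is to reduce Theorem~\ref{theorem2} directly to Corollary~\ref{corollary1}, by checking that $v$, regarded as a smooth map $I^{n}\to\mathbb{R}^{n}$, satisfies both the Poincar\'e--Miranda boundary condition~\eqref{condition1} and the regular-value hypothesis at $0$.

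First I would translate ``$v$ points outwards on $\partial I^{n}$'' into~\eqref{condition1}. On the relative interior of the face $\{x_{i}=1\}$ the outward unit normal is $e_{i}$, so outward-pointing gives $\langle v(x),e_{i}\rangle>0$, i.e.~$v_{i}(x)>0$ on that face; symmetrically $v_{i}(x)<0$ on $\{x_{i}=-1\}$. Hence for every index $i$ and every choice of the remaining coordinates
\[
v_{i}(x_{1},\dots,-1,\dots,x_{n})\cdot v_{i}(x_{1},\dots,1,\dots,x_{n})<0,
\]
which is precisely~\eqref{condition1} with $a_{i}=-1$, $b_{i}=1$.

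Next I would identify the non-degeneracy hypothesis with regularity of the value $0$: a zero $x_{0}$ of $v$ is non-degenerate by definition exactly when the Jacobian $Dv(x_{0})$ is invertible, equivalently surjective onto $\mathbb{R}^{n}$. Since this holds at every point of $v^{-1}(0)$, the origin is a regular value of $v$. With both hypotheses of Corollary~\ref{corollary1} verified, one application of that corollary yields at once that $\#v^{-1}(0)$ is odd.

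The only delicate point I anticipate is the behaviour of $v$ on the lower-dimensional strata (edges, vertices) of $\partial I^{n}$, where the outward normal is not single-valued; but since~\eqref{condition1} only constrains $v_{i}$ on codimension-one faces, the strict inequalities on the relative interiors of those faces are all that is needed, and this issue never really obstructs the argument. I therefore expect Theorem~\ref{theorem2} to be essentially an immediate corollary of Corollary~\ref{corollary1}, with no substantive obstacle beyond the sign bookkeeping above.
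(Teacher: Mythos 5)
Your argument is correct and is essentially the paper's own proof: both reduce the statement to Corollary~\ref{corollary1} by observing that outward-pointing on the faces forces $v_{i}<0$ on $\{x_{i}=-1\}$ and $v_{i}>0$ on $\{x_{i}=1\}$, giving condition~\eqref{condition1}, while non-degeneracy of the zeros is exactly regularity of the value $0$. Your version merely spells out the normal-vector bookkeeping and the edge/vertex caveat that the paper leaves implicit.
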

	
	\begin{proof}
		Since the vector field points outwards, we can derive that
		\[
		v_{i}(x_{1},\cdots,-1,\cdots,x_{n})\cdot v_{i}(x_{1},\cdots,1,\cdots,x_{n})<0,\quad i=1,2,\cdots,n,
		\]
		which satisfies the boundary condition \eqref{condition1}. According to Corollary \ref{corollary1}, the theorem holds.
	\end{proof}
	
	\section{A proof of Poincar\'e-Miranda theorem}
	
	\begin{proof}
		Let $f:I^{n}\rightarrow\mathbb{R}^{n}$ be a continuous map satisfying the boundary condition \eqref{condition1}. Suppose that $f$ has no zero point on $I^{n}$. Then compactness of $I^{n}$ implies existence of $\eta>0$ such that $||f(x)||>\eta$ for all $x\in I^{n}$. Let $g:I^{n}\rightarrow\mathbb{R}^{n}$ be a smooth map satisfying boundary condition \eqref{condition1} as well as
		\[
		||f(x)-g(x)||<\frac{\eta}{2},\quad \forall x\in I^{n},
		\]
		therefore
		\[
		||g(x)||>\frac{\eta}{2},\quad \forall x\in I^{n}.
		\]
		
		In view of Theorem \ref{theorem1}, there is a regular value $c$ for $g$ such that $||c||<\frac{\eta}{4}$, and $g^{-1}(c)$ is nonempty. Now for $x\in g^{-1}(c)$, we have $||g(x)||<\frac{\eta}{4}$, leading to a contradiction.
	\end{proof}

\end{document}